\documentclass[11pt]{amsart}

\usepackage[T1]{fontenc}
\usepackage[utf8]{inputenc}
\usepackage{geometry}
\usepackage{amssymb}
\usepackage{amsmath}
\usepackage{mathtools}
\usepackage{xcolor}
\usepackage{enumitem}
\usepackage[colorlinks=true,citecolor=red,linkcolor=blue,urlcolor=red]{hyperref}

\DeclareMathOperator{\conv}{Conv}
\DeclareMathOperator{\lct}{lct}
\DeclareMathOperator{\barc}{bar}

\newcommand{\RR}{\mathbb{R}}
\newcommand{\ZZ}{\mathbb{Z}}
\newcommand{\QQ}{\mathbb{Q}}
\newcommand{\PP}{\mathbb{P}}

\newtheorem{thm}{Theorem}[section]
\newtheorem{lem}[thm]{Lemma}
\newtheorem{prop}[thm]{Proposition}
\newtheorem{cor}[thm]{Corollary}
\newtheorem{conj}[thm]{Conjecture}

\theoremstyle{definition}
\newtheorem{rem}[thm]{Remark}

\numberwithin{equation}{section}

\begin{document}
	
	\title[The alpha spectrum of K-polystable toric $\QQ$-Fano varieties]
	{The alpha spectrum of K-polystable toric $\QQ$-Fano varieties}
	
	\author{Xian Wu}
	\address{}
	\email{scalarscales@gmail.com}
	
	\keywords{alpha invariant, K-polystable, toric variety, Fano polytope}
	\date{}
	
	\begin{abstract}
		We explicitly determine the numerical spectrum of
		the ordinary, non-equivariant global invariant for $n$-dimensional
		K-polystable toric $\QQ$-Fano varieties.
		Specifically, we prove that
		$$
		\left\{
		\alpha(X)\mid
		X\text{ is an $n$-dimensional K-polystable toric $\QQ$-Fano variety}
		\right\}
		=
		\QQ\cap
		\left[\frac{1}{n+1},\frac{1}{2}\right].
		$$
		This result establishes a complete toric realization theorem, which strengthens a question raised by Liu and Zhuang and refines the recent construction results of Liu and Zhu.
		The initial construction of the examples in this work was suggested by GPT-5.6 Sol, and was subsequently refined and rigorously developed by the author.
	\end{abstract}
	
	\maketitle
	
	\section{Introduction}\label{sec:introduction}
	
	In this paper, all varieties are defined over the complex field $\mathbb C$. 
	For a $\QQ$-Fano variety $X$, its ordinary, non-equivariant global invariant is
	$$
	\alpha(X)=\inf\{\lct(X,D)\mid D\sim_{\QQ}-K_X,\ D\geq0\}.
	$$

	Fujita and Odaka proved that an $n$-dimensional K-semistable $\QQ$-Fano variety satisfies
	\begin{equation}\label{eq:FO-lower}
		\alpha(X)\geq\frac1{n+1};
	\end{equation}
	see \cite[Theorem~3.5]{FO18}.  
	The lower bound is attained by $\PP^n$. 
	In the smooth setting, Jiang proved that equality in \eqref{eq:FO-lower} characterizes $\PP^n$ and proposed the following
	gap conjecture.
	
	\begin{conj}[{\cite[Conjecture~1.6]{Jia17}}]\label{conj:Jiang}
		Let $X$ be an $n$-dimensional K-semistable Fano manifold.  
		Then
		$$
		\alpha(X)<\frac1n\quad\Longleftrightarrow\quad X\cong\PP^n.
		$$
	\end{conj}
	
	For general $\QQ$-Fano varieties with dimension $\geq 2$, Liu and Zhuang asked whether there
	exists a K-semistable example with
	$$
	\frac{1}{n+1}<\alpha(X)<\frac{1}{n}
	$$
	\cite[Question~1.6(1)]{LZ22}. 
	 
	Liu and Zhu answered this affirmatively:
	for every $n\geq2$, there exists a K-polystable toric $\QQ$-Fano variety with $\alpha(X)=2/(2n+1)$ \cite[Theorem~1.4]{LZ26}.  
	They then asked whether one can enter the smaller interval between $1/(n+1)$ and $2/(2n+1)$ \cite[Question~1.5]{LZ26}.\\
	
	My main result determines the full numerical spectrum.	
	\begin{thm}\label{thm:main}
		For every integer $n\geq1$, for any given rational number 
		$$
		r\in \mathbb Q \cap \left[\frac{1}{n+1},\frac{1}{2}\right],
		$$ 
		there exists an $n$-dimensional K-polystable (K-semistable) toric $\QQ$-Fano variety $X$ such that $\alpha(X)=r$.
	\end{thm}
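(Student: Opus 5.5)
The plan is to reduce the statement to convex geometry of the anticanonical polytope, and then to build suitable polytopes, using products to pass between dimensions. Let $X$ be a toric $\QQ$-Fano variety with primitive ray generators $v_1,\dots,v_N\in N$ and moment polytope
$$P=\{m\in M_\RR\mid \langle m,v_i\rangle\ge -1 \text{ for all } i\}.$$

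\textbf{Step 1: a formula for $\alpha$.} First I would show that the non-equivariant $\alpha(X)$ equals the torus-equivariant one. The route is a degeneration argument: degenerate an arbitrary $D\sim_\QQ -K_X$ to a torus-invariant divisor, and use lower semicontinuity of the lct. The invariant effective divisors in $|-K_X|_\QQ$ are
$$D_m=\sum_i(\langle m,v_i\rangle+1)D_i,\qquad m\in P.$$
For a primitive $w\in N$, write $h_P$ for the support function of $P$. Then $A_X(w)=h_P(-w)$ and $\operatorname{ord}_w(D_m)=A_X(w)+\langle m,w\rangle$. Since toric valuations compute lct's of toric pairs, this gives
$$\alpha(X)=\frac{1}{1+\lambda(P)},\qquad \lambda(P)=\sup_{w\ne0}\frac{h_P(w)}{h_P(-w)}=\min\{\mu\mid -P\subseteq\mu P\}.$$
Because $h_P(-v_i)=1$, I expect the supremum to be attained at some $v_i$, so that
$$\lambda(P)=\max_i\max_{m\in P}\langle m,v_i\rangle.$$
Two checks: for $\PP^n$ this gives $\lambda=n$, and for $(\PP^1)^n$ it gives $\lambda=1$. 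K-polystability is equivalent to $\barc(P)=0$. The Fujita--Odaka bound \eqref{eq:FO-lower} together with $\lambda\ge1$ shows the spectrum lies in $[1/(n+1),1/2]$. So the theorem becomes: for every rational $s\in[1,n]$, find an $n$-dimensional toric $\QQ$-Fano with $\barc(P)=0$ and $\lambda(P)=s$.

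\textbf{Step 2: reduction by products.} For a product, $P_{X\times Y}=P_X\times P_Y$, so the barycenters add and
$$\lambda(P_X\times P_Y)=\max\bigl(\lambda(P_X),\lambda(P_Y)\bigr),$$
hence $\alpha(X\times Y)=\min\bigl(\alpha(X),\alpha(Y)\bigr)$. Since $\lambda((\PP^1)^{n-k})=1$, multiplying by $(\PP^1)^{n-k}$ preserves both $\lambda$ and K-polystability. It therefore suffices to prove the following: for each $k\ge2$ and each rational $s\in[k-1,k]$, there is a $k$-dimensional example with $\lambda=s$. The case $s=1$ is handled by $(\PP^1)^n$, and the union of the intervals $[k-1,k]$ for $k\le n$ covers $[1,n]$.

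\textbf{Step 3: the $k$-dimensional family.} The natural first idea is the $S_{k+1}$-symmetric truncation $\Delta\cap(-s\Delta)$ of the reflexive simplex. Its barycenter is $0$ by symmetry, and $\lambda=s$. However, it fails to be an anticanonical polytope: opposite parallel facets at distances $1$ and $s$ cannot both come from primitive rays. So I would keep the $S_{k+1}$-symmetry, which forces the barycenter to be $0$, but cut the vertices of $\Delta$ by non-parallel facets. Concretely, start from the fan of $\PP^k$ with rays $e_1,\dots,e_k,e_0=-\sum e_i$. Add the $S_{k+1}$-orbit of a ray $v=-(a e_j+b\sum_{i\ne j}e_i)$ with integers $a,b$, or pass to an overlattice, so that each new ray is primitive and truncates near the vertex of $P$ opposite the facet $e_j$. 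The quantity $\max_{m\in P}\langle m,v_i\rangle$ is then computed at the new vertices and is an explicit rational function of $(a,b)$. It equals $k$ when there is no truncation and decreases toward $k-1$ as the cut deepens. I would then solve for $(a,b)$ realizing a prescribed $s=p/q$.

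\textbf{Main obstacle.} The hard step is this lattice and primitivity bookkeeping. The truncation depth must be an arbitrary rational number, while every ray stays primitive, all ray vectors remain $S_{k+1}$-equivariant, and the maximum defining $\lambda$ is attained where claimed rather than at an old ray. If a single orbit of cuts gives only a discrete set of values, I would enlarge the degrees of freedom, either with two orbits of rays or with a finite-index sublattice of $N$ adapted to $q$. I would then check directly that $\lambda$ sweeps every rational in $[k-1,k]$.
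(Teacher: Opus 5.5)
\textbf{There is a genuine gap in Step 3.} Your Steps 1 and 2 are sound. Step 1 is the Blum--Jonsson formula $\alpha(X)=1/(1+\mu(X))$ that the paper cites; your $\lambda(P)$ is the paper's $\mu(X)$. The reduction by multiplying with $(\PP^1)^{n-k}$ is also correct. The problem is Step 3, which is the entire content of the theorem and is only announced, not carried out.

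Moreover, in its natural reading your family has no free parameter. In $N_\RR=\RR^{k+1}/\RR(1,\dots,1)$, the vectors fixed by the stabilizer of the index $j$ are exactly the multiples of $e_j$. Indeed, $ae_j+b\sum_{i\neq j}e_i=(a-b)e_j$ when the sum runs over all $i\neq j$. So an $S_{k+1}$-equivariant choice of one primitive cut per vertex of the simplex is forced to be $\{-e_j\}$. This gives the centrally symmetric polytope $\Delta\cap(-\Delta)$ with $\lambda=1$, not a depth parameter sweeping from $k$ towards $k-1$.

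If instead the sum runs only over $1\le i\le k$, the new ray is $(b-a)e_j+be_0$, whose orbit has $k(k+1)$ elements. Then everything you list under \emph{main obstacle} still has to be proved:
\begin{itemize}
\item that every proposed ray, including the old $e_j$, is a vertex of $\conv\{v_\rho\}$, i.e.\ defines a facet of $P$;
\item where the maximum defining $\lambda$ is attained;
\item above all, that the resulting values exhaust $\QQ\cap[k-1,k]$.
\end{itemize}
As it stands, the existence statement is assumed rather than proved.

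\textbf{The missing idea} is to construct the anticanonical polytope directly, so that primitivity becomes automatic. The paper works in $M_\RR=\{u\in\RR^{n+1}\mid\sum u_i=0\}$ with $P=\conv\{(e_i-e_{i-1})/A,\ (e_i-e_{i+1})/B\}$, indices mod $n+1$. Here $A\ge B$ are coprime with $A/B=s$; for $r=p/q$ one takes $(A,B)=(q-p,p)$.
\begin{itemize}
\item Under the cyclic-difference isomorphism $[c]\mapsto(c_{i+1}-c_i)_i$ from $N$ onto $\{d\in\ZZ^{n+1}\mid\sum d_i=0\}$, the polar is the slab $Q=\{\sum d_i=0,\ -A\le d_i\le B\}$.
\item The vertices of $Q$ are permutations of $(-A,\dots,-A,C,B,\dots,B)$, and they are primitive precisely because $\gcd(A,B)=1$.
\item The cyclic shift preserves $P$ and fixes only $0$, so $\barc(P)=0$.
\item One reads off $\mu=\max_{d\in Q,\,i}\{d_{i-1}/A,\,-d_i/B\}=A/B$ at once.
\end{itemize}
Every rational in $[1,n]$ is such a ratio $A/B$, so this covers the whole range in dimension $n$ directly, and your product reduction is not needed.
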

	
    When $n=1$, $X$ must be $\mathbb{P}^1$, so the main theorem holds. 
    We therefore assume $n\geq 2$ in what follows.
    
	\section{Toric preliminaries}\label{sec:prelim}
	
	Let $N$ be a lattice of rank $n$ and $M=N^\vee$.  
	Let $X=X_\Sigma$ be a toric $\QQ$-Fano variety, write $v_\rho$ for the
	primitive generator of $\rho\in\Sigma(1)$ and set
	$$
	Q:=\conv\{v_\rho\mid \rho\in\Sigma(1)\}\subset N_\RR.
    $$
	Our polar convention is
	$$
	P=Q^\vee
	=\{u\in M_\RR \mid \langle u,v\rangle\geq-1,\ \forall v\in Q\}.
	$$
	Thus $P=P_{-K_X}$ is the anti-canonical polytope.  
	Its barycenter is always the volume barycenter
	\begin{equation}\label{barc}
	\barc(P)=\frac{1}{\operatorname{vol}(P)} \int_{P}u\,du,
    \end{equation}

	\begin{prop}\label{prop:toric-K-stability}
		Let $X$ be a toric $\QQ$-Fano variety. Then the following are equivalent:
		\begin{enumerate}[label=(\roman*),itemsep=6pt]
			\item $X$ is K-semistable;
			\item $X$ is K-polystable;
			\item $\barc(P)=0$.
		\end{enumerate}
	\end{prop}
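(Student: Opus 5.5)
This equivalence is a standard fact that I would assemble from known results rather than prove from scratch; the plan is to verify the chain (ii)$\Rightarrow$(i)$\Rightarrow$(iii)$\Rightarrow$(ii). The implication (ii)$\Rightarrow$(i) holds by definition, since K-polystability includes K-semistability. For the remaining two implications I would use the torus action and the valuative (Fujita--Li) criterion. In its $T$-equivariant form (Zhuang; also Blum--Jonsson), it says that $X$ is K-semistable if and only if $\beta(E)=A_X(E)\operatorname{vol}(-K_X)-\int_0^\infty \operatorname{vol}(-K_X-tE)\,dt\geq 0$ for all $T$-invariant divisors $E$ over $X$.

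For (i)$\Rightarrow$(iii), I would consider a toric valuation given by a primitive $v\in N$. The corresponding divisor $E_v$ satisfies $A_X(E_v)=1+\min_{u\in P}\langle u,v\rangle$ in the polar convention. Moreover, $\operatorname{vol}(-K_X-tE_v)$ is $n!$ times the volume of the slice $\{u\in P\mid \langle u,v\rangle\geq \min_P\langle\cdot,v\rangle+t\}$. Integrating in $t$ and using Fubini, the computation gives
$$\beta(E_v)=n!\operatorname{vol}(P)\,\langle \barc(P),v\rangle,$$
because the terms involving $\min_P\langle\cdot,v\rangle$ cancel against the log discrepancy. Assuming K-semistability, applying this to both $v$ and $-v$ forces $\langle\barc(P),v\rangle=0$ for all $v\in N$, and hence $\barc(P)=0$.

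For (iii)$\Rightarrow$(ii), I would cite Berman's theorem for toric $\QQ$-Fano varieties. Alternatively one can use Wang--Zhu together with the analytic-to-algebraic passage in Berman and in Li--Tian--Wang: vanishing of the barycenter gives a singular Kähler--Einstein metric, and hence K-polystability. A purely algebraic route is also available. By the equivariant criterion for polystability (Li--Wang--Xu, Zhuang), it suffices to check two things. The first is $\beta\geq0$ on $T$-invariant divisorial valuations, which by the formula above reduces to $\beta(E_v)=0$ for toric $E_v$ and suffices by $T$-equivariance. The second is that $\beta(E)=0$ only for product-type valuations, which here are exactly the toric ones.

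The main obstacle is this last step: passing from semistability to polystability, and justifying that checking toric valuations suffices. I would not reprove it. Instead I would invoke the literature, namely Berman (K-polystability of toric $\QQ$-Fanos with $\barc(P)=0$) and Blum--Jonsson (the toric computation of $\beta$, which shows toric $\delta$-invariants are computed by toric valuations). The paper can then simply quote these.
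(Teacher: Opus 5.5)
Your proposal is correct and takes essentially the paper's approach. The paper just cites \cite[Corollary~7.17]{BJ20} for (i)$\Leftrightarrow$(iii) and \cite[Corollary~1.2]{Ber16} for (ii)$\Leftrightarrow$(iii). You instead prove (i)$\Rightarrow$(iii) directly by computing $\beta$ on toric divisors, cite Berman for (iii)$\Rightarrow$(ii), and close the cycle with the trivial (ii)$\Rightarrow$(i). This way you need only the easy, non-equivariant direction of the valuative criterion.

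There is one slip in that computation. With $P=\{u\mid \langle u,v_\rho\rangle\geq -1\}$, the log discrepancy is $A_X(E_v)=-\min_{u\in P}\langle u,v\rangle$, not $1+\min_{u\in P}\langle u,v\rangle$; your formula would give $A_X(D_\rho)=0$. With the correct value, the cancellation gives $\beta(E_v)=-n!\operatorname{vol}(P)\langle \barc(P),v\rangle$. The sign does not matter, since you apply the formula to both $v$ and $-v$.
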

	
	\begin{proof}
		The equivalence between K-semistability and 
		$\barc(P)=0$ follows from
		\cite[Corollary~7.17]{BJ20}, while the equivalence between
		K-polystability and $\barc(P)=0$ follows from
		\cite[Corollary~1.2]{Ber16}.
	\end{proof}
	
	Let
	\begin{equation}\label{eq:mu-def}
		\mu(X):=\max_{u\in P,\,v\in Q}\langle u,v\rangle.
	\end{equation}
	
	The maximum in \eqref{eq:mu-def} may equivalently be taken over the
	primitive ray generators.  
	The toric formula of Blum and Jonsson for the ordinary, non-equivariant global alpha invariant is
	\begin{equation}\label{eq:BJ-alpha}
		\alpha(X)=\frac1{1+\mu(X)};
	\end{equation}
	see \cite[Corollary~7.16]{BJ20}.
    When $X$ is a toric $\mathbb{Q}$-Fano variety, the associated polytope $P$ is rational. 
    Consequently, $\alpha(X)$ is rational.
    More generally, let $X$ be a $\mathbb{Q}$-Fano variety satisfying $\alpha(X)<1$. Then $\alpha(X)$ is rational by \cite[Theorem 1.7]{Bir21}.
	
	\begin{lem}\cite[Theorem~1.5]{LZ22}
	Let $X$ be a toric $\QQ$-Fano variety such that $\alpha(X)>\frac{1}{2}$, then $\operatorname{Aut}(X)$ is finite.
	\end{lem}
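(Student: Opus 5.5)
The statement should be read with care. For a toric variety of dimension $n\geq 1$, the dense torus $T\cong(\mathbb C^*)^n$ acts faithfully, so $\operatorname{Aut}(X)\supseteq T$ is infinite. Hence the lemma, as worded, is equivalent to the assertion that its hypothesis is never met: \emph{every toric $\QQ$-Fano variety of positive dimension satisfies $\alpha(X)\leq\frac12$.} Once this is established, the implication holds vacuously. It is also exactly the upper endpoint of the interval in Theorem~\ref{thm:main}. So my plan is to prove $\alpha(X)\le \frac12$ and then deduce the lemma.

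By the Blum--Jonsson formula \eqref{eq:BJ-alpha}, $\alpha(X)\le\frac12$ is equivalent to $\mu(X)\geq 1$. That is, I need some $u\in P$ and some primitive generator $v_\rho$ with $\langle u,v_\rho\rangle\geq1$. The key reformulation uses the gauge (Minkowski functional) of $Q$. Since $X$ is complete, $0$ lies in the interior of $Q$, and by bipolar duality, for every $w\in N_\RR$,
\[
\|w\|_Q:=\inf\{t>0\mid w\in tQ\}=\max_{u\in P}\langle u,-w\rangle .
\]
Taking $w=-v_\rho$ gives $\max_{u\in P}\langle u,v_\rho\rangle=\|-v_\rho\|_Q$. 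Therefore $\mu(X)\geq1$ holds as soon as some $-v_\rho$ fails to lie in the interior of $Q$. I will double-check the sign conventions against the polar convention $P=\{u\mid \langle u,v\rangle\ge -1\ \forall v\in Q\}$, which gives $\|w\|_Q=\max_P\langle u,-w\rangle$.

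Now suppose, for contradiction, that $-v_\rho\in\operatorname{int}Q$ for every $\rho\in\Sigma(1)$. Since $Q=\conv\{v_\rho\}$ and $\operatorname{int}Q$ is convex, taking convex hulls gives $-Q\subseteq\operatorname{int}Q$. A compact convex body that is strictly contained in the interior of another cannot have the same volume. But $\operatorname{vol}(-Q)=\operatorname{vol}(Q)$, which is a contradiction. To be rigorous, I will note that $-Q$ is a compact subset of the open set $\operatorname{int}Q$, so $(1+\varepsilon)(-Q)\subseteq Q$ for some small $\varepsilon>0$. This gives $(1+\varepsilon)^n\operatorname{vol}(Q)\leq\operatorname{vol}(Q)$, which is impossible.

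So some $\rho$ has $\|-v_\rho\|_Q\geq1$, hence $\mu(X)\ge1$ and $\alpha(X)\le\frac12$. Consequently no toric $\QQ$-Fano variety satisfies $\alpha(X)>\frac12$, and the lemma holds.

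The only delicate point is the duality identity between the gauge of $Q$ and the support function of $P$. It follows from the bipolar theorem, using that $Q$ is a convex body with $0$ in its interior. A geometric alternative is to produce directly the divisor $D=\sum_\rho(\langle u,v_\rho\rangle+1)D_\rho\sim_\QQ -K_X$, where $u$ is a vertex of $P$ attaining $\mu(X)$. This divisor has a coefficient $\ge2$, so $\lct(X,D)\le\frac12$.
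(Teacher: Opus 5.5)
Your proof is correct, but it runs in the opposite logical direction from the paper. The paper does not prove this lemma itself. It uses \cite[Theorem~1.5]{LZ22} as a black box, a result about $\QQ$-Fano varieties in general, where finiteness of $\operatorname{Aut}(X)$ is a genuine conclusion. Then, in the proof of Theorem~\ref{thm:LZ-upper}, it combines that result with $T\subset\operatorname{Aut}(X)$ to obtain $\alpha(X)\le\frac12$.

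You instead prove $\alpha(X)\le\frac12$ first, directly from the toric formula \eqref{eq:BJ-alpha}. The identity $\max_{u\in P}\langle u,v_\rho\rangle=\|{-v_\rho}\|_Q$ is right for the paper's convention $P=-Q^\circ$. The volume comparison correctly rules out $-Q\subseteq\operatorname{int}Q$. The lemma then follows vacuously, since $\operatorname{Aut}(X)\supseteq T$ is infinite.

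Your route is self-contained. It needs only the Blum--Jonsson formula and an elementary fact about convex bodies. In your divisor variant it needs even less: just the trivial bound $\lct(X,D)\le 1/(1+\mu(X))$, from a component of coefficient $1+\mu(X)$. As a result, Theorem~\ref{thm:LZ-upper} and Corollary~\ref{cor:necessary} no longer depend on \cite{LZ22}. It also makes explicit that the lemma, as stated for toric varieties, carries no information. The paper's citation gives brevity and the general non-toric statement. Your argument does not recover that general statement, because it relies essentially on the toric combinatorics of $P$ and $Q$.
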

	\begin{thm}\label{thm:LZ-upper}
		Let $X$ be a toric $\QQ$-Fano variety, then 
		$$
		\alpha(X)\leq \frac{1}{2}.
		$$
	\end{thm}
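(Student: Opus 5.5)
We will show that $\mu(X)\geq 1$; by the Blum--Jonsson formula \eqref{eq:BJ-alpha} this gives $\alpha(X)=1/(1+\mu(X))\leq 1/2$. The argument is by contradiction and uses only the convex-geometric duality between $P$ and $Q$. No lattice or K-stability input is needed beyond the fact that $Q$ is a full-dimensional polytope with $0$ in its interior, so that $P=Q^\vee$ is a compact convex body with $0$ in its interior.

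Suppose that $\mu(X)<1$, so that $\langle u,v\rangle<1$ for all $u\in P$ and $v\in Q$. Fix $u\in P$. Since $Q$ is compact, the maximum of $\langle u,\cdot\rangle$ over $Q$ is attained, so there is $\varepsilon_u>0$ with $\langle -u,v\rangle\geq -1+\varepsilon_u$ for all $v\in Q$. By the polar convention $P=\{w\mid \langle w,v\rangle\geq -1\ \forall v\in Q\}$, this means that $-u$ lies in $P$. Moreover, since $Q$ is bounded, a small neighbourhood of $-u$ also lies in $P$, so $-u\in\operatorname{int}P$.

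Hence $-P\subset \operatorname{int}P$. The set $-P$ is compact, and $\operatorname{int} P$ is an open set containing it. Since $0\in\operatorname{int}P$ and $P$ is convex and compact, there is $\delta>0$ with $(1+\delta)(-P)\subset P$. Taking volumes gives
$$
(1+\delta)^n\operatorname{vol}(P)=(1+\delta)^n\operatorname{vol}(-P)\leq \operatorname{vol}(P).
$$
This is impossible because $0<\operatorname{vol}(P)<\infty$. Therefore $\mu(X)\geq 1$, and the theorem follows.

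The only step requiring care is the compactness argument producing the uniform $\delta$. Concretely, the continuous function $w\mapsto \min_{v\in Q}\langle w,v\rangle$ is $>-1$ on the compact set $-P$, so it is at least $-1+\eta$ there for some $\eta>0$. Since $\max_{w\in -P}\lvert\langle w,v\rangle\rvert$ is bounded uniformly for $v\in Q$, scaling by a factor $1+\delta$ with $\delta$ small keeps the value $\geq -1$. This gives $(1+\delta)(-P)\subset P$.
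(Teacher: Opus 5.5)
Your proof is correct, but it takes a different route from the paper's. The paper's proof is one line: it applies the Liu--Zhuang lemma quoted just before the statement ($\alpha(X)>1/2$ forces $\operatorname{Aut}(X)$ to be finite) and notes that the torus $T\cong(\mathbb C^*)^n\subset\operatorname{Aut}(X)$ is infinite. You instead stay on the convex-geometric side. Via the Blum--Jonsson formula \eqref{eq:BJ-alpha}, the claim becomes $\mu(X)\geq 1$. You obtain this from a volume comparison between $-P$ and a dilate of it sitting inside $P$. This amounts to the classical fact that the Minkowski asymmetry of a convex body about an interior point is at least $1$.

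What your approach buys:
\begin{itemize}
\item It is self-contained once \eqref{eq:BJ-alpha} is granted, and it uses the same quantity $\mu$ that the paper later computes for $P_{n;A,B}$.
\item It puts the upper bound on the same footing as the toric form of \eqref{eq:FO-lower}, which amounts to $-P\subseteq nP$ when $\barc(P)=0$.
\item It gives the equality case for free: $\alpha(X)=1/2$ if and only if $P=-P$, as happens for $(A,B)=(1,1)$.
\end{itemize}
The paper's route instead attributes the bound to the infinite torus action rather than to the shape of $P$, at the cost of relying on an external theorem.

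A small simplification: your compactness step is unnecessary. By the definition of $\mu$, $\langle -u,v\rangle\geq-\mu$ for all $u\in P$ and $v\in Q$, i.e.\ $-P\subseteq\mu P$. Here $\mu>0$ because $0$ is interior to both $P$ and $Q$. Hence $\operatorname{vol}(P)\leq\mu^n\operatorname{vol}(P)$, which gives $\mu\geq 1$ directly, without arguing by contradiction.
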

	\begin{proof}
		Since $T\simeq(\mathbb C^*)^n\subset\operatorname{Aut}(X)$ is infinite, $\alpha(X)\leq 1/2$.
	\end{proof}
	
	\begin{rem}
		It also follows from this proposition that there exists no construction of examples satisfying \cite[Question~1.6(2)]{LZ22} via toric varieties.
	\end{rem}
	\begin{cor}\label{cor:necessary}
		If $X$ is an $n$-dimensional K-semistable toric $\QQ$-Fano variety,
		then
		$$
		\alpha(X)\in\QQ\cap\left[\frac1{n+1},\frac12\right].
		$$
	\end{cor}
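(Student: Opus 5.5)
The corollary follows by combining three facts already recorded in Section~\ref{sec:prelim}: the Fujita--Odaka lower bound \eqref{eq:FO-lower}, the upper bound of Theorem~\ref{thm:LZ-upper}, and rationality of $\alpha(X)$ in the toric setting.

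\emph{Lower bound.} Let $X$ be an $n$-dimensional K-semistable toric $\QQ$-Fano variety. Being K-semistable, it satisfies $\alpha(X)\geq \frac{1}{n+1}$ by \cite[Theorem~3.5]{FO18}, i.e.\ \eqref{eq:FO-lower}. No toric input is needed here.

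\emph{Upper bound.} Since $X$ is a toric $\QQ$-Fano variety, Theorem~\ref{thm:LZ-upper} gives $\alpha(X)\leq\frac12$. Concretely, the torus $T\cong(\mathbb C^*)^n$ sits inside $\operatorname{Aut}(X)$, so $\operatorname{Aut}(X)$ is infinite. The contrapositive of the preceding lemma \cite[Theorem~1.5]{LZ22} then rules out $\alpha(X)>\frac12$.

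\emph{Rationality.} Here I use the Blum--Jonsson formula \eqref{eq:BJ-alpha}, $\alpha(X)=1/(1+\mu(X))$. The polytope $Q$ is the convex hull of finitely many lattice points $v_\rho$, so $P=Q^\vee$ is cut out by finitely many inequalities $\langle u,v_\rho\rangle\geq -1$ with integral coefficients. Hence $P$ is a rational polytope, and its vertices lie in $M_\QQ$.

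The function $(u,v)\mapsto\langle u,v\rangle$ is linear in each variable separately. Fixing $v=v_\rho$, the maximum over $u\in P$ is attained at a vertex of $P$. As noted after \eqref{eq:mu-def}, the maximum over $v\in Q$ may be taken over the generators $v_\rho$. So $\mu(X)$ is a maximum of finitely many pairings between rational vertices of $P$ and integral vectors $v_\rho$, hence $\mu(X)\in\QQ$. It follows that $\alpha(X)\in\QQ$; alternatively one can cite \cite[Theorem~1.7]{Bir21}, since $\alpha(X)\leq\frac12<1$.

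Combining the three steps gives $\alpha(X)\in\QQ\cap\left[\frac1{n+1},\frac12\right]$. There is no real obstacle. The only point needing a line of justification is the vertex argument for rationality of $\mu(X)$, which is routine linear programming over a rational polytope.
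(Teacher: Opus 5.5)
Your proposal is correct and matches the paper, which states the corollary without a separate proof as the immediate combination of the Fujita--Odaka bound \eqref{eq:FO-lower}, Theorem~\ref{thm:LZ-upper}, and the rationality remark following \eqref{eq:BJ-alpha}. Your vertex argument simply spells out the paper's one-line claim that rationality of $P$ forces $\alpha(X)\in\QQ$.
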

	
	\section{Construction of Examples}\label{sec:construction}
	
	Let $r=p/q$ be a rational number with a coprime pair $p,q$.
	Let $(A,B):=(q-p,p)$, then $A,B$ are coprime and $r\in \left[1/(n+1),1/2\right]$ if and only if 
	\begin{equation}\label{eq:interval}
		B\leq A\leq nB.
	\end{equation}
	In particular, by $\gcd(A,B)=1$, $(A,B)=(1,1)$ when $A=B$ and $(A,B)=(n,1)$ when $A=nB$.
	
	Let $e_0,\dots,e_n$ be the standard coordinate vectors in $\RR^{n+1}$ and set
    $$
	M=\left\{u\in\ZZ^{n+1}\mid \sum_i u_i=0\right\},
	\qquad
	N=\ZZ^{n+1}/\ZZ(1,\dots,1).
	$$
	In $M_\RR$, we define
	\begin{equation}\label{eq:generators}
		a_i=\frac{e_i-e_{i-1}}A,
		\qquad
		b_i=\frac{e_i-e_{i+1}}B,
		\qquad 0\leq i\leq n,
	\end{equation}
	where the indices are read modulo $n+1$, and $M_\RR$ and $N_\RR$ paired by 
	$$
	\langle u,[c]\rangle:=\sum_i u_ic_i \quad \text{where}\ u\in M_\RR, [c]\in N_\RR.
	$$
	Let two polytopes be
    \begin{equation}\label{eq:PQ}
	  \begin{gathered}
		  P:=P_{n;A,B}=\conv\{a_i,b_i \mid 0\leq i\leq n\},\\[4pt]
		  Q:=P^\vee=\{[c]\in N_\RR \mid \langle u,[c] \rangle \geq -1,\ \forall u\in P\}.
	  \end{gathered}
    \end{equation}
   	
	The following propositions are the variable-width analogue of
	\cite[Proposition~2.1]{LZ26}.  
	
	\begin{prop}\label{prop:0-in-P}
		$P$ is full-dimensional in $M_\RR$ and contains $0$ in its interior.
	\end{prop}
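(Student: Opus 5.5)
We need to show that $P=\conv\{a_i,b_i\}$ is full-dimensional in $M_\RR$ (which has dimension $n$) and that $0$ lies in its interior. The plan is to show that $0$ is a strictly positive convex combination of vertices whose differences span $M_\RR$. A standard criterion does this in one stroke: a polytope $\conv(S)$ contains $0$ in its interior if and only if no nonzero linear functional $\ell$ satisfies $\ell(s)\le 0$ for all $s\in S$. Equivalently, every nonzero $\ell$ is positive somewhere on $S$. This criterion also gives full-dimensionality automatically, since an interior point exists.

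\textbf{Step 1: positive combinations summing to zero.} The vectors $e_i-e_{i-1}$ for $i=0,\dots,n$ (indices mod $n+1$) sum to $0$ in $M_\RR$. Hence
$$
\sum_{i=0}^n \frac{A}{n+1}\, a_i \;=\; \frac{1}{n+1}\sum_i (e_i-e_{i-1}) \;=\; 0 .
$$
Normalizing the coefficients shows that $0=\frac{1}{n+1}\sum_i a_i$ is the barycenter of the $a_i$, so $0\in P$. Similarly $0=\frac1{n+1}\sum_i b_i$.

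\textbf{Step 2: no nonzero functional is nonpositive on all vertices.} A linear functional on $M_\RR$ is given by some $[c]\in N_\RR$, via $u\mapsto\sum_i u_ic_i$. We have
$$
\langle a_i,[c]\rangle=\frac{c_i-c_{i-1}}{A}.
$$
Suppose $\langle a_i,[c]\rangle\le 0$ for all $i$. Then $c_i\le c_{i-1}$ for every $i$ cyclically, so
$$
c_0\ge c_1\ge\cdots\ge c_n\ge c_0 ,
$$
and all $c_i$ are equal. This means $[c]=0$ in $N_\RR=\RR^{n+1}/\RR(1,\dots,1)$. So every nonzero functional takes a positive value on some $a_i$, and already $\conv\{a_i\}\subseteq P$ contains $0$ in its interior relative to $M_\RR$.

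\textbf{Step 3: conclusion, and the role of the $b_i$.} For full-dimensionality, observe that $e_1-e_0,\dots,e_n-e_{n-1}$ form a basis of $M_\RR$: they are $n$ vectors, the sum condition defining $M$ removes only one dimension, and they are visibly independent. Hence the $a_i$ span $M_\RR$. The $b_i$ are not needed for this statement, though the same argument applies to them with the cyclic inequalities reversed.

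There is no serious obstacle here. The only point needing care is to justify the interior criterion via separation of a point from a convex set, and to make sure the pairing with the quotient $N_\RR$ is handled correctly: the functional vanishes identically exactly when $c$ is constant, which is what makes $[c]=0$.
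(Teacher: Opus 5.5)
Your proof is correct, but the decisive interiority step takes a dual route rather than the paper's primal one.

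The paper observes that the $n+1$ points $a_i$ span $M_\RR$ and satisfy the strictly positive relation $\sum_i a_i=0$. Hence $\conv\{a_i\}$ is an $n$-simplex with barycenter $0$, which is therefore an interior point. This is exactly your Steps~1 and~3 put together, so those two steps alone already finish the proof. Your explicit basis $e_1-e_0,\dots,e_n-e_{n-1}$ also makes the paper's terse spanning claim precise.

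Your Step~2 instead uses the separation (Gordan-type) criterion. It rules out every nonzero functional that is nonpositive on all $a_i$ via the cyclic chain $c_0\ge c_1\ge\cdots\ge c_n\ge c_0$. This gives full-dimensionality and interiority at once, with no linear-independence check. The price is an appeal to the supporting hyperplane theorem.

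A nice feature of your route is that it is the same computation as Proposition~\ref{prop:Q-shape}. Since $\langle a_i,[c]\rangle=d_{i-1}/A$ with $d_i=c_{i+1}-c_i$, Step~2 just says that a vector $d$ with $\sum_i d_i=0$ and all $d_i\le 0$ must vanish. After replacing $d$ by $-d$, this says the polar of $\conv\{a_i\}$, and hence also $Q$, has trivial recession cone and is bounded. That is the dual formulation of $0$ being interior to $\conv\{a_i\}\subseteq P$.

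Either half of your proposal suffices on its own, so presenting both is redundant but harmless.
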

	\begin{proof}
		Since $\{e_i\}$ spans $\RR^{n+1}$, then $\{a_i=(e_i-e_{i-1})/A\}$ spans $M_\RR$.
		Otherwise, $0$ can be written by $0=\sum_i a_i$.
		Therefore, $0$ in its interior of $\conv\{a_i \mid 0\leq i\leq n\}\subset \operatorname{Int} P$.
	\end{proof}
	
	\begin{prop}\label{prop:Q-shape}
		The cyclic difference map $N_\RR \to \RR^{n+1}$ is defined by
		$$
		[c]\longmapsto d,\quad d_i=c_{i+1}-c_i,
		$$
		induces the linear isomorphism
		$$
		N_\RR\xrightarrow{\sim}
		\left\{d\in\RR^{n+1}\mid \sum_i d_i=0\right\}.
		$$
  	    Under this identification,
	    \begin{align*}
	    	Q &= \left\{d\in N_\RR \mid \ -A\leq d_i\leq B,\ 0 \le i\le n\right\}\\ 
	    	  &= \left\{d\in\RR^{n+1} \mid \sum_i d_i=0,\ -A\leq d_i\leq B, 0 \le i\le n\right\} .
	    \end{align*}
	\end{prop}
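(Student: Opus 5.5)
First I will check that the cyclic difference map is well defined and an isomorphism. If $c$ is replaced by $c+t(1,\dots,1)$, every difference $c_{i+1}-c_i$ is unchanged, so the map descends to $N_\RR=\RR^{n+1}/\RR(1,\dots,1)$. It is linear. Its image lies in $\{\sum_i d_i=0\}$ because the cyclic sum telescopes. Its kernel consists of the $c$ with all cyclic differences zero, i.e.\ the constant vectors, which are zero in $N_\RR$, so the map is injective. Both spaces have dimension $n$, so it is an isomorphism. Equivalently, any $d$ with $\sum_i d_i=0$ has the explicit preimage $c_k=\sum_{j<k}d_j$, and the relation $\sum_j d_j=0$ is exactly what makes this consistent cyclically.

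Next I will compute the pairings of the vertices of $P$ in terms of $d$. Since $P=\conv\{a_i,b_i\}$, the inequality $\langle u,[c]\rangle\ge -1$ holds on all of $P$ if and only if it holds at the vertices $a_i$ and $b_i$. Using the definitions in \eqref{eq:generators}, we get
$$\langle a_i,[c]\rangle=\frac{c_i-c_{i-1}}{A}=\frac{d_{i-1}}{A},\qquad \langle b_i,[c]\rangle=\frac{c_i-c_{i+1}}{B}=-\frac{d_i}{B}.$$
The conditions therefore become $d_{i-1}\ge -A$ and $d_i\le B$ for all $i$. Since indices are read modulo $n+1$, letting $i$ range over all of $0,\dots,n$ turns these into $-A\le d_i\le B$ for every $i$. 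Adding back the constraint $\sum_i d_i=0$, which describes the image of the isomorphism, gives the second description of $Q$.

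The main obstacle is only bookkeeping: the signs and the index shift in the $a_i$ pairing must be tracked correctly, since $a_i$ gives a lower bound on $d_{i-1}$ rather than on $d_i$. This is harmless only because the indices run cyclically over all $i$. The reduction from all $u\in P$ to the vertices of $P$ is the standard fact that a linear functional attains its minimum over a polytope at a vertex.
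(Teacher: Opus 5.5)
Your proposal is correct and follows the same route as the paper: you pair the generators $a_i,b_i$ of $P$ against $[c]$ to get $d_{i-1}\ge -A$ and $d_i\le B$, and cyclic reindexing then gives the box description of $Q$. You also verify that the cyclic difference map is a well-defined isomorphism onto $\{\sum_i d_i=0\}$, and you justify reducing from all of $P$ to its generators; the paper leaves both of these implicit.
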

	\begin{proof}
		Compute 
		\begin{align*}
			\left\langle a_i,[c]\right \rangle &= \langle \frac{e_i-e_{i-1}}{A},[c]\rangle\\
			                        &=\frac{c_i-c_{i-1}}{A}=\frac{d_{i-1}}{A}
		\end{align*}
		It implies that $\langle a_i,[c]\rangle\geq -1$ if and only if $d_{i-1}\geq -A$. 
		Similarly, $\langle b_i,[c]\rangle\geq -1$ if and only if $d_i\leq B$.
		Therefore, 
		   \begin{equation*}
		   	Q = \left\{d\in N_\RR \mid -A\leq d_i\leq B,\ 0 \le i\le n\right\}.  \qedhere
		   \end{equation*}
	\end{proof}
	
	\begin{prop}\label{prop:vertex-prim}
		Every vertex of $Q$ is primitive.
	\end{prop}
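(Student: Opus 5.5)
The plan is to work entirely in the cyclic-difference coordinates of Proposition~\ref{prop:Q-shape}. There, $Q$ is the polytope $\{d\in\RR^{n+1}\mid \sum_i d_i=0,\ -A\le d_i\le B\}$, and the statement reduces to elementary arithmetic with $\gcd(A,B)=1$. The steps are: identify the lattice, list the vertices, and check that the entries of each vertex have greatest common divisor $1$.

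\emph{Step 1: identify the lattice $N$.} I claim the difference map sends $N=\ZZ^{n+1}/\ZZ(1,\dots,1)$ isomorphically onto
$$
L=\left\{d\in\ZZ^{n+1}\mid \textstyle\sum_i d_i=0\right\}.
$$
Integer vectors $c$ clearly give integer $d$. Conversely, given $d\in L$, put $c_0=0$ and $c_{i+1}=c_i+d_i$. This produces an integer $c$, and the wrap-around condition holds because $\sum_i d_i=0$. The lattice $L$ is saturated in $\ZZ^{n+1}$. Hence a nonzero $d\in L$ is primitive in $N$ if and only if $\gcd(d_0,\dots,d_n)=1$.

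\emph{Step 2: describe the vertices.} The polytope $Q$ is cut out by one linear equation together with the box inequalities $-A\le d_i\le B$ in $\RR^{n+1}$. So a vertex must have at least $n$ of its coordinates at a bound. Write such a vertex as having $k$ coordinates equal to $B$, $m$ coordinates equal to $-A$, and (if $k+m=n$) one remaining coordinate
$$
t=mA-kB\in[-A,B].
$$
Every coordinate is then an integer, so each vertex lies in $L$.

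\emph{Step 3: check primitivity.} Split into three cases.
\begin{enumerate}[label=(\alph*)]
\item Both values $B$ and $-A$ occur among the coordinates. Then the gcd of the entries divides $\gcd(A,B)=1$.
\item Only $B$ occurs, so $k=n$ and $t=-nB$. The condition $t\ge -A$ forces $A\ge nB$. By \eqref{eq:interval} this means $A=nB$, hence $(A,B)=(n,1)$. The vertex is a permutation of $(1,\dots,1,-n)$, which is primitive.
\item Only $-A$ occurs, so $m=n$ and $t=nA$. The condition $t\le B$ requires $nA\le B\le A$. This is impossible for $n\ge2$ and $A\ge1$.
\end{enumerate}
The same case analysis also covers vertices where all $n+1$ coordinates sit at bounds.

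I expect the main obstacle to be the lattice bookkeeping in Step~1, namely making sure that primitivity in the quotient lattice $N$ really is the gcd condition on $d$. The vertex enumeration and the gcd argument are routine once the constraint $B\le A\le nB$ from \eqref{eq:interval} is used to rule out the degenerate one-sided cases.
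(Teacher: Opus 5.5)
Your proof is correct and follows the same route as the paper: at least $n$ box constraints are active at a vertex, so each vertex is a permutation of $(-A,\dots,-A,C,B,\dots,B)$ with integer entries, and $\gcd(A,B)=1$ together with $B\le A\le nB$ gives primitivity. Your version is somewhat more careful than the paper's in two places it leaves implicit: you check that $N$ corresponds exactly to the integer vectors $d$ with $\sum_i d_i=0$, so primitivity is the gcd condition on the entries, and you explicitly rule out the infeasible case where all $n$ active bounds equal $-A$.
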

	\begin{proof}
		The polytope $Q$ is the intersection of the $n$-dimensional
		hyperplane $\sum d_i=0$ with the box $[-A,B]^{n+1}$.  
		Hence at a
		vertex at least $n$ coordinate bounds are active, and the coordinates cannot all be equal.
    	Let $k\geq 0$ coordinates are $-A$ and the other $n-k$ are $B$, and the free coordinate is 
    	$$
    	C=kA-(n-k)B.
    	$$ 
	    Therefore, Every vertex is of the form 
	    \begin{equation}\label{eq:vertex}
	    (\underbrace{-A,\dots,-A}_{k\text{ copies}},\,C,\,\underbrace{B,\dots,B}_{n-k\text{ copies}}),
	    \end{equation}
	    up to a permutation of the coordinates, and $A=nB$ when $k=0$.
	    Hence it's primitive by 
	    \begin{equation*}
	    	\gcd(A,B,C)=\gcd(A,B)=1.
	        \qedhere 
	    \end{equation*}	
	\end{proof}
				
    It remains to check $X$ is $\QQ$-Fano.  
    \begin{prop}\label{prop:Fano}
    	$X:=X_{\Sigma_Q}$ is $\QQ$-Fano.
    \end{prop}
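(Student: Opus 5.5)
We will use the standard toric criterion. Let $\Sigma_Q$ be the fan spanned by the faces of $Q$. Then $X_{\Sigma_Q}$ is $\QQ$-Fano as soon as three things hold: $Q$ is a full-dimensional lattice polytope in $N_\RR$ with $0$ in its interior, every vertex of $Q$ is a primitive lattice vector, and the rays of $\Sigma_Q$ are exactly the rays through the vertices of $Q$. Under these conditions $-K_X=\sum_\rho D_\rho$ is $\QQ$-Cartier and ample. On each maximal cone $\sigma=\operatorname{cone}(F)$, with $F$ a facet of $Q$, the support function of $-K_X$ is the linear function equal to $1$ on the primitive generators. Because those generators are exactly the vertices of $F$, this function is $-u_F$, where $F=\{v\in Q\mid\langle u_F,v\rangle=-1\}$. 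Its strict convexity is then just the statement that $Q$ lies on the side $\langle u_F,\cdot\rangle\ge -1$ and touches that hyperplane only along $F$. Finally, $X$ is $\QQ$-factorial only up to the simplicial condition, but $\QQ$-Fano requires only klt and $-K_X$ ample. Klt is automatic for a toric variety once $K_X$ is $\QQ$-Cartier.

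The plan is therefore to check these three hypotheses in order.

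\textbf{Lattice polytope.} The vertices of $Q$, given by \eqref{eq:vertex}, have integer coordinates in the difference coordinates. We should confirm that the identification of Proposition~\ref{prop:Q-shape} sends $N$ onto $\{d\in\ZZ^{n+1}\mid\sum d_i=0\}$. Given such a $d$, set $c_0=0$ and $c_{i+1}=c_i+d_i$. These are integers, and the closing condition $\sum_i d_i=0$ makes the recursion consistent modulo $n+1$. So each vertex is a lattice point of $N$.

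\textbf{Interior point and full dimension.} Both follow from Proposition~\ref{prop:0-in-P} and polar duality: $Q=P^\vee$ with $0\in\operatorname{Int}P$, so $Q$ is bounded and contains $0$ in its interior. Directly, $d=0$ satisfies $-A<0<B$ strictly.

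\textbf{Primitivity and rays.} Primitivity is Proposition~\ref{prop:vertex-prim}. The rays of the face fan are spanned by the vertices of $Q$, so the primitive generators $v_\rho$ are the vertices themselves. Hence $Q$ coincides with $\conv\{v_\rho\}$, consistent with Section~\ref{sec:prelim}, and the polar $P=Q^\vee$ is the anticanonical polytope. Ampleness of $-K_X$ then follows from the convexity argument in the first paragraph, or equivalently from the fact that $P$ is a polytope whose normal fan is $\Sigma_Q$.

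The main obstacle is lattice bookkeeping rather than geometry. One must make sure that the lattice $N=\ZZ^{n+1}/\ZZ(1,\dots,1)$ really corresponds to integer difference vectors, including the case $k=0$, where $C=0$ is forced with $A=nB$. One must also make sure that primitivity in $N$ matches the gcd computation of the coordinates $d_i$. A vector $d$ equals $m d'$ with $d'$ an integer difference vector exactly when $m$ divides every $d_i$, so primitivity is equivalent to $\gcd(d_i)=1$, which Proposition~\ref{prop:vertex-prim} provides.
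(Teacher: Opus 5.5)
Your proposal is correct and follows essentially the same route as the paper. You take the face fan of $Q$ and use primitivity of the vertices to see that the support function of $-K_X$ is the linear function $-u_F$ on each facet cone, so $-K_X$ is $\QQ$-Cartier and $X$ is klt. Ampleness then comes from strict convexity, i.e.\ from $\Sigma_Q$ being the normal fan of $P=Q^\vee$. Your check that $N$ corresponds to integer difference vectors is bookkeeping the paper leaves implicit. One harmless slip: for $k=0$ the free coordinate is $C=-nB=-A$, not $0$, and this does not affect integrality or primitivity.
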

    \begin{proof}
    	Let $\Sigma_Q$ be the face fan of $Q$.
    	Since every vertex of $Q$ is primitive, every facet of $Q$ provides rational Cartier data for $-K_X$. 
    	Hence $-K_X$ is $\mathbb Q$-Cartier, hence $X$ is klt.
    	Since $-K_X=\sum_v D_v$, 
    	$$
    	P_{-K_X}=\{u\in M_\RR \mid \langle u,v\rangle \geq -1,\ \forall\ \text{vertex of}\ Q\ v\}=Q^\vee=P
    	$$
    	Moreover, the fan $\Sigma_Q$ is the normal fan of the polar polytope $P=Q^\vee$. 
    	Hence the support function of $-K_X$ is strictly convex, and therefore $-K_X$ is ample.
    \end{proof}
	
	\section{Proof of Theorem~\ref{thm:main}}
	
	Finally, we will check that $\alpha(X)=r$ for given rational number $r$. 
	
	\begin{prop}
		The volume barycenter of $P$ 
		$$
		\barc(P)=0.
		$$
		Moreover, $X$ is K-polystable by Proposition~\ref{prop:toric-K-stability}. 
	\end{prop}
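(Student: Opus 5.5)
Since the barycenter is defined by the integral \eqref{barc}, the plan is to show that it is fixed by a group of linear symmetries of $M_\RR$ that preserve $P$ and fix no nonzero vector. The symmetry to use is the cyclic shift $\sigma$ of coordinates in $\RR^{n+1}$, given by $\sigma(e_i)=e_{i+1}$ with indices modulo $n+1$.

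First I will check that $\sigma$ preserves $M$ and $M_\RR$. It permutes coordinates, so it preserves the hyperplane $\sum_i u_i=0$ and the lattice $\ZZ^{n+1}$. It is a linear isomorphism of determinant $\pm1$, so it preserves Lebesgue measure on $M_\RR$.

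Next I will check that $\sigma$ maps $P$ onto itself. From \eqref{eq:generators},
$$
\sigma(a_i)=\frac{e_{i+1}-e_i}{A}=a_{i+1},\qquad \sigma(b_i)=\frac{e_{i+1}-e_{i+2}}{B}=b_{i+1}.
$$
So $\sigma$ permutes the vertex set $\{a_i,b_i\}$, and since it is linear it maps $P=\conv\{a_i,b_i\}$ onto itself.

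Changing variables $u=\sigma(w)$ in \eqref{barc} then gives
$$
\barc(P)=\frac{1}{\operatorname{vol}(P)}\int_{\sigma(P)}u\,du=\frac{1}{\operatorname{vol}(P)}\int_P\sigma(w)\,dw=\sigma(\barc(P)).
$$
Here $\operatorname{vol}(P)>0$ because $P$ is full-dimensional by Proposition~\ref{prop:0-in-P}. Thus $\barc(P)$ is a $\sigma$-fixed vector of $M_\RR$.

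A vector fixed by the cyclic shift has all coordinates equal, say $u=(t,\dots,t)$. Membership in $M_\RR$ forces $(n+1)t=0$, hence $t=0$ and $\barc(P)=0$. K-polystability (equivalently K-semistability) of $X$ then follows from Proposition~\ref{prop:toric-K-stability}, since $X$ is a toric $\QQ$-Fano variety by Proposition~\ref{prop:Fano} and $P=P_{-K_X}$.

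There is no real obstacle. The only point that needs care is the index convention: $a_i$ uses $e_{i-1}$ and $b_i$ uses $e_{i+1}$, and a single cyclic shift sends both families to themselves, as computed above.
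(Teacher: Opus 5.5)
Your proof is correct and is essentially the paper's argument. You show that $P$ is invariant under a cyclic coordinate shift (yours is the inverse of the paper's $\sigma$, which makes no difference), so $\barc(P)$ is a shift-fixed vector of $M_\RR$ and hence $0$, and then Proposition~\ref{prop:toric-K-stability} gives K-polystability.
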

	\begin{proof}
		Consider the permutation $\sigma\in S_{n+1}$ is given by
		$$
		\sigma(x_0,x_1,\cdots,x_{n-1},x_n)=(x_1,x_2,\cdots,x_n,x_0).
		$$
		One has $\sigma(a_i)=a_{i-1}, \sigma(b_i)=b_{i-1}$, then $\sigma(P)=P$.
		Since $\sigma$ preserves Lebesgue measure, 
		$$
		\sigma(\barc(P))=\barc(P).
		$$
		But $\{u\in M_\RR \mid \sigma(u)=u\}=\{0\}$ implies $\barc(P)=0$.
	\end{proof}
	
	\begin{prop}
		The ordinary global alpha invariant 
		  $$
		  \alpha(X)=\frac{B}{A+B}=r
		  $$
		for the given $r$.
	\end{prop}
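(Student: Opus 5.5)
The plan is to use the Blum--Jonsson formula \eqref{eq:BJ-alpha}, $\alpha(X)=1/(1+\mu(X))$, and compute $\mu(X)=\max_{u\in P,\,v\in Q}\langle u,v\rangle$ directly. Since the pairing is bilinear and $P,Q$ are polytopes, the maximum is attained at vertices. For $P$ it therefore suffices to take $u\in\{a_i,b_i\}$, and for each such $u$ we maximize the linear functional $\langle u,\cdot\rangle$ over $Q$. We use the description of $Q$ from Proposition~\ref{prop:Q-shape}:
$$Q=\Bigl\{d \;\Bigm|\; \sum_i d_i=0,\ -A\le d_i\le B\Bigr\}.$$

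The key step is to express each functional in the $d$-coordinates. The computation in the proof of Proposition~\ref{prop:Q-shape} gives
$$\langle a_i,[c]\rangle=\frac{d_{i-1}}{A},\qquad \langle b_i,[c]\rangle=\frac{c_i-c_{i+1}}{B}=-\frac{d_i}{B}.$$
So $\max_Q\langle a_i,\cdot\rangle=\frac1A\max_Q d_{i-1}$ and $\max_Q\langle b_i,\cdot\rangle=\frac1B\max_Q(-d_i)$.

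The box constraint gives the upper bounds $d_{i-1}\le B$ and $-d_i\le A$. We must check that these bounds are attained on $Q$, i.e.\ that they are compatible with $\sum_j d_j=0$ and the box.
\begin{itemize}
\item For $d_i=-A$: distribute $+A$ among the remaining $n$ coordinates, each at most $B$. This is possible exactly when $A\le nB$, which is the right half of \eqref{eq:interval}.
\item For $d_{i-1}=B$: distribute $-B$ among the other $n$ coordinates, each at least $-A$. This needs $B\le nA$, which follows from $B\le A$.
\end{itemize}
Concretely, the witnesses can be taken to be vertices of the form \eqref{eq:vertex}. This feasibility check is the only place where the hypothesis \eqref{eq:interval} enters, and it is the step to verify with care, including the boundary cases $A=nB$ and $A=B$.

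Hence
$$\mu(X)=\max\Bigl\{\frac BA,\ \frac AB\Bigr\}=\frac AB,$$
since $A\ge B$. Therefore
$$\alpha(X)=\frac1{1+A/B}=\frac{B}{A+B}=\frac{p}{q}=r.$$
Combined with the K-polystability proposition above, this yields Theorem~\ref{thm:main} for $n\ge2$.
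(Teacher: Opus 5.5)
Your proposal is correct and follows essentially the same route as the paper: reduce to the generators $a_i,b_i$ of $P$, rewrite the functionals on $Q$ as $d_{i-1}/A$ and $-d_i/B$, and conclude $\mu(X)=\max\{B/A,A/B\}=A/B$, hence $\alpha(X)=B/(A+B)=r$. Your explicit check that the bounds $d_{i-1}=B$ and $d_i=-A$ are attained on $Q$ (using $B\le nA$ and $A\le nB$) fills in a step the paper leaves implicit; the hypothesis $B\le A$ is also used when you pick $A/B$ as the larger value, so the feasibility check is not literally its only use.
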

	\begin{proof}
		Compute
		\begin{align*}
			\mu(X)=\max_{u\in P,[c]\in Q}\langle u,[c]\rangle
			     &=\max_{u\in \{a_i,b_i\},[c]\in Q}\langle u,[c]\rangle\\
		         &=\max_{d\in Q,i}\left\{ \frac{d_{i-1}}{A},-\frac{d_i}{B}\right\}\\
			     &=\max\left\{\frac{A}{B},\frac{B}{A}\right\}=\frac{A}{B}.
		\end{align*}
		Therefore 
		\begin{equation*}
			\alpha(X)=\frac1{1+\mu(X)}=\frac{B}{A+B}=r.
			\qedhere
		\end{equation*}
	\end{proof}
	
	\begin{rem}
		For $(A,B)=(2n-1,2)$, $\alpha(X)=2/(2n+1)$; this is precisely the example of
		\cite{LZ26}.  
	\end{rem}
	
	Consequently, the main theorem Theorem~\ref{thm:main} follows.
	
	\begin{rem}
		AI assistance was used to formulate research questions, identify relevant literature on recent methods, suggest candidate examples, check properties of those examples, prepare the initial \TeX{} source, and conduct a preliminary review. 
		The author independently made the final selection of examples, carried out all mathematical refinements and rigorous arguments, and completed the final revisions. 
		All AI-assisted mathematical checks were independently verified by the author before being incorporated into the manuscript.
	\end{rem}
	
	\section{Properties of $X_{n;A,B}$}
	
	Write 
	$$
	(n+1)B=k(A+B)+m,\ \text{where}\ 0 \leq m<A+B,\ k,m\in \mathbb{Z}.
	$$
	\begin{lem}The vertices of Q are as follows.
		 Let $R:=\# \operatorname{Vert}(Q)$ be the number of vertices of $Q$.
		 When
		\begin{enumerate}[label=(\roman*),itemsep=6pt]
			\item $m=0$. 
			The vertices of $Q$: 
	        $$
            v=(\underbrace{-A,\dots,-A}_{k\text{ copies}},\,\,\underbrace{B,\dots,B}_{n+1-k\text{ copies}}),
            $$ 
			and its all permutations of the coordinates.
			In this case, $R=\dbinom{n+1}{k}$.
			
			\item $m>0$.  
			The vertices of $Q$: 
	        $$
            v=(\underbrace{-A,\dots,-A}_{k\text{ copies}},\,C,\,\underbrace{B,\dots,B}_{n-k\text{ copies}}),
            $$ 
			and its all permutations of the coordinates.
			In this case, $R=(n+1)\dbinom{n}{k}$.
		\end{enumerate}
	\end{lem}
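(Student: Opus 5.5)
The plan is to work entirely in the description of $Q$ from Proposition~\ref{prop:Q-shape}: $Q$ is the slice of the box $[-A,B]^{n+1}$ by the hyperplane $\sum_i d_i=0$. A point of this $n$-dimensional polytope is a vertex if and only if the active box constraints, together with the hyperplane, cut out a single point. Since the constraints $d_i=-A$ and $d_i=B$ for distinct indices are independent of each other and of $\sum_i d_i=0$, this means exactly that at least $n$ coordinates are at a bound. This is the observation already used in the proof of Proposition~\ref{prop:vertex-prim}. So the vertices are the feasible points with at least $n$ coordinates in $\{-A,B\}$, and I will classify these by how many coordinates lie strictly inside $(-A,B)$.

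\emph{Step 1: all $n+1$ coordinates at bounds.} Suppose $j$ coordinates equal $-A$ and $n+1-j$ equal $B$. The hyperplane condition reads $(n+1)B=j(A+B)$. This forces $m=0$ and $j=k$. Conversely, if $m=0$ then every such point, i.e.\ every permutation of the vector in (i), is feasible and is a vertex. Choosing the positions of the $-A$'s gives $\binom{n+1}{k}$ of them.

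\emph{Step 2: exactly $n$ coordinates at bounds.} Suppose $j$ coordinates equal $-A$, $n-j$ equal $B$, and the free coordinate is $C=jA-(n-j)B$. The key computation is that
$$C<B \iff j(A+B)<(n+1)B,\qquad C>-A\iff (j+1)(A+B)>(n+1)B.$$
Hence a strictly interior free coordinate exists if and only if $j(A+B)<(n+1)B<(j+1)(A+B)$. That is, $j=k$ and $m>0$. When $m=0$, the boundary cases $C=B$ or $C=-A$ just reproduce the points of Step 1, so no new vertices arise. When $m>0$, Step 1 yields nothing. All points of the stated shape are vertices, and since $C\notin\{-A,B\}$, a vertex is determined by the position of $C$ ($n+1$ choices) and the set of positions of the $-A$'s among the remaining $n$ ($\binom{n}{k}$ choices). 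This gives $R=(n+1)\binom{n}{k}$.

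The only delicate point is Step 2. There I must check the two inequalities carefully and handle the boundary case $m=0$, where the ``free'' coordinate hits a bound and must not be double counted. I also need to confirm that $0\le k\le n$ in each case. This follows from $0<B\le A$, which gives $0<(n+1)B/(A+B)\le (n+1)/2$. In case (i) it gives $1\le k\le n$.
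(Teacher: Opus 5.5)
Your proof is correct and follows the route the paper indicates in the proof of Proposition~\ref{prop:vertex-prim}: a feasible point is a vertex exactly when at least $n$ coordinate bounds are active, and the remaining coordinate is $C=kA-(n-k)B$. Your two inequalities forcing the number of $-A$'s to equal $k$, the split according to whether $m=0$, and the vertex count fill in the details the paper leaves out, since the lemma is stated there without proof.
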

		
	\begin{prop}
		The following properties hold for such $X_{n;A,B}$.
		\begin{enumerate}[label=(\roman*),itemsep=6pt]
			\item $X_{n;A,B}$ is Gorenstein when $(A,B)=(n,1)$.
			Moreover, the index of $X$ is $AB$ when $A<nB$;
			
			\item $X_{n;A,B}$ is $\mathbb{Q}$-factorial if and only if 
			  \begin{enumerate}
				  \item $n=2$ or
				  \item $(n;A,B)=(n;n,1)$ or $(3;1,1)$;
			  \end{enumerate}
			  
			\item For $n\ge 2$, the simplest example is
			$$
			X_{2,1,1}\simeq \operatorname{Bl}_{\{3\text{pts}\}} \mathbb{P}^2$$
			which is the blow-up of $\mathbb{P}^2$ at three torus-invariant points. It is the only smooth (resp., terminal) example among $X_{n;A,B}$. Moreover, $X_{n;A,B}$ is canonical if and only if $B=1$;
			
			\item Let $g=\gcd(n+1,A+B)$, then
			$$ 
			\operatorname{Cl}(X_{n;A,B}) \simeq \ZZ^{R-n}\oplus (\ZZ/g\ZZ)^{n-1};
			$$
			
			\item The volume of $-K_{X_{n;A,B}}$ is
			$$
			\operatorname{vol}(-K_X)=(-K_X)^n=n!\operatorname{vol}_M(P)=\frac{(n+1)\binom{n}{k}}{A^kB^{n-k}}.
			$$
		\end{enumerate}
		We omit the proofs, since all of them amount to standard exercises. 
		We record these results only so that readers may readily reference these properties.
	\end{prop}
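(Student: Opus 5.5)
The plan is to treat the five items separately. Everything rests on two inputs: the facet description of $Q$ from Proposition~\ref{prop:Q-shape}, and the vertex list in the preceding Lemma. The cones of the face fan $\Sigma_Q$ are the cones over faces of $Q$. The candidate facets of $Q$ are the slices $F_i^-=\{d_i=-A\}$ and $F_i^+=\{d_i=B\}$, with inner normals $a_{i+1}$ and $b_i$ respectively. I first check which slices are genuinely $(n-1)$-dimensional. On $F_i^+$ the remaining $n$ coordinates lie in $[-A,B]$ and sum to $-B$; since $-nA<-B<nB$, this is always a facet. On $F_i^-$ the remaining coordinates sum to $A$, which is strictly inside $(-nA,nB)$ unless $A=nB$. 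In that boundary case $F_i^-$ degenerates to the single vertex $(-n,1,\dots,1)$.

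\textbf{Items (i) and (iii).} For (i), the Gorenstein index is the least $\ell$ with $\ell u_F\in M$ for every facet normal $u_F$. If $A<nB$ both $a_i$ and $b_i$ occur, and since $\gcd(A,B)=1$ the index is $\operatorname{lcm}(A,B)=AB$. If $(A,B)=(n,1)$ only the $b_i=e_i-e_{i+1}$ occur, and these are integral, so $X$ is Gorenstein.

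For (iii), I use the standard criterion that a toric Fano $X$ is canonical iff $0$ is the only lattice point in the interior of $Q$, and terminal iff the lattice points of $Q$ are exactly $0$ and the vertices. An integral $d$ with $\sum d_i=0$ and $-A<d_i<B$ for all $i$:
\begin{itemize}
\item For $B=1$, all $d_i\le 0$ and the sum is $0$, so $d=0$; hence $X$ is canonical.
\item For $B\ge2$ (so $A\ge2$), $d=(1,-1,0,\dots,0)$ is a nonzero interior point; hence $X$ is not canonical.
\end{itemize}
Among $B=1$, any $d=(1,-1,0,\dots)$-type boundary point that is not a vertex rules out terminality except when $n=2$ and $A=B=1$. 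For this single case I identify $Q$ as the hexagon of $\operatorname{Bl}_{3}\PP^2$ and check that the cones are unimodular. This also gives smoothness.

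\textbf{Item (ii).} $X$ is $\QQ$-factorial iff $\Sigma_Q$ is simplicial, i.e. every facet of $Q$ is a simplex. Each facet is a hyperplane slice of the cube $[-A,B]^n$ at level $-B$ (for $F^+$) or $A$ (for $F^-$). Such a slice is an $(n-1)$-simplex exactly when it cuts off a single corner, meaning the level lies within one edge length $A+B$ of an extreme value $nB$ or $-nA$, or when $n=2$ and every slice is a segment. For $F^+$ the conditions are:
\begin{itemize}
\item $nB-(A+B)\le -B$, i.e. $A\ge nB$, forcing $(A,B)=(n,1)$;
\item $-B\le -nA+(A+B)$, i.e. $(n-1)A\le 2B$, which together with $A\ge B$ forces $n=2$, or $n=3$ with $A=B=1$.
\end{itemize}
I then check the $F^-$ facets under these same conditions. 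When $A=nB$ they are only vertices, so they impose nothing.

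\textbf{Items (iv) and (v), and the main obstacle.} For (v), the cyclic symmetry together with the decomposition $P=\bigcup_F\conv(0,F)$ over facets gives $\operatorname{vol}(P)$. Equivalently, $(-K_X)^n$ is a sum over the maximal cones of $\Sigma_Q$, that is, over the vertices of $P$ paired with the vertex data of $Q$. The count $R$ and the normalized heights $1/A$, $1/B$ should assemble into $(n+1)\binom nk/(A^kB^{n-k})$.

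For (iv) I use the exact sequence $0\to M\to\ZZ^{R}\to \operatorname{Cl}(X)\to 0$, with first map $u\mapsto(\langle u,v\rangle)_v$. The free rank $R-n$ is immediate. The torsion is the cokernel torsion, computed by Smith normal form: equivalently, $\ZZ^{n}$ modulo the sublattice of $N$ generated by the vertices (\ref{eq:vertex}). Differences of permuted vertices give the vectors $(A+B)(e_i-e_j)$ modulo the lattice $\{\sum d_i=0\}$, and one vertex contributes a generator whose coordinate sum relates to $n+1$. I expect the main obstacle to be pinning down that the quotient is exactly $(\ZZ/g)^{n-1}$ with $g=\gcd(n+1,A+B)$. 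The approach I would try first is to reduce modulo the span of the $(A+B)(e_i-e_j)$ and then use the one remaining vertex class to cut the $(\ZZ/(A+B))^{n}$-type group down via its interaction with $n+1$.
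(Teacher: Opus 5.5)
Your arguments for (i)–(iii) are correct and essentially complete. The paper omits all proofs of this proposition as ``standard exercises'', so there is no route to compare against. The genuine gap is in (iv).

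\textbf{Item (iv).} Working modulo the span of the $(A+B)(e_i-e_j)$ and then quotienting by ``the one remaining vertex class'' only works when $m=0$. When $m>0$, the vertex $v=(-A,\dots,-A,C,B,\dots,B)$ carries a third value $C=B-m$. So the vertices are \emph{not} all congruent modulo $(A+B)N$, and no cyclic quotient of $N/(A+B)N\cong(\ZZ/(A+B))^n$ can equal $(\ZZ/g)^{n-1}$ when $g<A+B$. Concretely, for $(n;A,B)=(2;1,1)$ your recipe gives $(\ZZ/2)^2/\langle\overline{v}\rangle\cong\ZZ/2$, whereas $\operatorname{Cl}(\operatorname{Bl}_3\PP^2)=\ZZ^4$ is torsion-free.

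The missing idea is that transpositions of a vertex also produce $m(e_i-e_j)$, not just $(A+B)(e_i-e_j)$. This uses $1\le k\le n-1$, which follows from $B\le A\le nB$ and $n\ge2$. Hence the sublattice $N'$ generated by the vertices contains $gN$, where
\[
g=\gcd(m,A+B)=\gcd((n+1)B,A+B)=\gcd(n+1,A+B),
\]
the last equality because $\gcd(B,A+B)=1$.
\begin{enumerate}[label=(\alph*)]
\item Every coordinate of a vertex is $\equiv B\pmod g$, so all vertices are congruent modulo $gN$ and $N'=gN+\ZZ v$.
\item $N/gN\cong\{x\in(\ZZ/g)^{n+1}:\sum x_i=0\}$.
\item In this group $\overline v=B\cdot\overline{\mathbf 1}$, and $B$ is a unit mod $g$.
\item Since $g\mid n+1$, $\overline{\mathbf 1}=\sum_{i\ge1}(e_i-e_0)$ is primitive.
\end{enumerate}
Together these give $N/N'\cong(\ZZ/g)^{n-1}$.

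\textbf{Item (v).} This item is only asserted (``should assemble''), and the sketch is muddled in three ways.
\begin{enumerate}[label=(\alph*)]
\item The pyramid decomposition $P=\bigcup_F\conv(0,F)$ is indexed by facets of $P$, i.e.\ vertices of $Q$ (rays of $\Sigma_Q$), not by maximal cones of $\Sigma_Q$.
\item Every pyramid has lattice height $1$, because each vertex $v$ is primitive.
\item Cyclic symmetry does not make the $R$ pyramids congruent, since there are $\binom nk$ cyclic orbits when $m>0$. Either use the full $S_{n+1}$ permuting the $d$-coordinates, or compute directly.
\end{enumerate}

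What you actually need is the following.
\begin{enumerate}[label=(\alph*)]
\item For $m>0$, the facet of $P$ dual to $v$ is the simplex on $\{a_{i+1}:v_i=-A\}\cup\{b_j:v_j=B\}$.
\item Any $n$ of the cyclic differences $e_{i+1}-e_i$ form a $\ZZ$-basis of $M$. So each pyramid has normalized volume $1/(A^kB^{n-k})$, and multiplying by $R=(n+1)\binom nk$ gives the formula.
\item For $m=0$, each facet of $P$ has $n+1$ vertices forming a circuit, since $kA=(n+1-k)B$. Triangulate it by omitting one $b_j$. Each pyramid then has normalized volume $(n+1-k)/(A^kB^{n-k})$, and $\binom{n+1}{k}(n+1-k)=(n+1)\binom nk$.
\end{enumerate}
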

	
	\section*{Acknowledgements}
       I am grateful to Professor Yuchen Liu for carefully reading the manuscript and for his many insightful comments and suggestions, which greatly improved this work. 
       I also thank him for proposing a natural follow-up problem motivated by the examples constructed in this paper.
       I would also like to express my gratitude to my supervisor, Professor Zhixian Zhu, and Professor Lei Song for their support, with particular thanks to Professor Zhu for advising me to include an explicit disclosure of the role of AI assistance in this work.
	
   \bibliographystyle{amsalpha}
   \bibliography{reference}

\providecommand{\bysame}{\leavevmode\hbox to3em{\hrulefill}\thinspace}
\providecommand{\MR}{\relax\ifhmode\unskip\space\fi MR }
\providecommand{\MRhref}[2]{%
  \href{http://www.ams.org/mathscinet-getitem?mr=#1}{#2}
}
\providecommand{\href}[2]{#2}
\begin{thebibliography}{Ber16}

\bibitem[Ber16]{Ber16}
Robert~J. Berman, \emph{{K-polystability of Q-Fano varieties admitting
  K{\"a}hler--Einstein metrics}}, Inventiones Mathematicae \textbf{203} (2016),
  no.~3, 973--1025.

\bibitem[Bir21]{Bir21}
Caucher Birkar, \emph{{Singularities of linear systems and boundedness of Fano
  varieties}}, Annals of Mathematics \textbf{193} (2021), no.~2, 347--405.

\bibitem[BJ20]{BJ20}
Harold Blum and Mattias Jonsson, \emph{Thresholds, valuations, and
  k-stability}, Advances in Mathematics \textbf{365} (2020), 107062.

\bibitem[FO18]{FO18}
Kento Fujita and Yuji Odaka, \emph{{On the K-stability of Fano varieties and
  anticanonical divisors}}, Tohoku Mathematical Journal. Second Series
  \textbf{70} (2018), no.~4, 511--521.

\bibitem[Jia17]{Jia17}
Chen Jiang, \emph{{K-semistable Fano manifolds with the smallest alpha
  invariant}}, International Journal of Mathematics \textbf{28} (2017), no.~6,
  1750044.

\bibitem[LZ22]{LZ22}
Yuchen Liu and Ziquan Zhuang, \emph{{On the sharpness of Tian's criterion for
  K-stability}}, Nagoya Mathematical Journal \textbf{245} (2022), 41--73.

\bibitem[LZ26]{LZ26}
Jihao Liu and Ziwen Zhu, \emph{{K-polystable toric Fano varieties with small
  alpha invariants}}, July 2026.

\end{thebibliography}
	
\end{document}